\newcommand\shorttitle{Moments of moments of characteristic polynomials of random unitary matrices and lattice point counts}
\newcommand\authors{Theodoros Assiotis and Jonathan P. Keating}
\ifodd\value{page}
\authors
\shorttitle
\newtheorem{thm}{Theorem}[section]
\newtheorem{defn}[thm]{Definition}
\newtheorem{rmk}[thm]{Remark}
\newtheorem{prop}[thm]{Proposition}
\title{\large \bf MOMENTS OF MOMENTS OF CHARACTERISTIC POLYNOMIALS OF RANDOM UNITARY MATRICES AND LATTICE POINT COUNTS}
\author{\small THEODOROS ASSIOTIS AND JONATHAN P. KEATING}
\date{}
\begin{document}

\maketitle

\begin{abstract}
In this note we give a combinatorial and non-computational proof of the asymptotics of the integer moments of the moments of the characteristic polynomials of Haar distributed unitary matrices as the size of the matrix goes to infinity. This is achieved by relating these quantities to a lattice point count problem. Our main result is a new explicit expression for the leading order coefficient in the asymptotic as a volume of a certain region involving continuous Gelfand-Tsetlin patterns with constraints.
\end{abstract}

\tableofcontents

\section{Introduction}

\subsection{Main result}

Let
\begin{align*}
P_N(U,\theta)=\det \left(I-Ue^{-\i \theta}\right)
\end{align*}
denote the characteristic polynomial on the circle (here and throughout $\i=\sqrt{-1}$) of a matrix $U \in \mathbb{U}(N)$, where $\mathbb{U}(N)$ is the group of $N\times N$ unitary matrices. 

We endow $\mathbb{U}(N)$ with the normalized Haar measure and denote by $\mathbb{E}$ the mathematical expectation with respect to it. We are interested in the following quantities, which we call the moments of the moments:
\begin{align}
\textnormal{MoM}_N\left(k,\beta\right)=\mathbb{E}\left[\left(\frac{1}{2\pi}\int_{0}^{2\pi}|P_N(U,\theta)|^{2\beta}d\theta\right)^k\right].
\end{align}

We give here a new proof, alternative to the one in \cite{BaileyKeating}, of a formula for the asymptotics of $\textnormal{MoM}_N\left(k,\beta\right)$ as $N \to \infty$ for integer $k,\beta$. Our main result is a new explicit expression for the constant in the leading order term in the asymptotic expansion, which we relate to a lattice point counting problem. Specifically, we prove the following theorem.

\begin{thm}\label{MainTheorem}
Let $k,\beta \in \mathbb{N}$. Then, 
\begin{align}
\textnormal{MoM}_N\left(k,\beta\right)=\mathfrak{c}(k,\beta)N^{k^2\beta^2-k+1}+O\left(N^{k^2\beta^2-k}\right)
\end{align}
where $\mathfrak{c}(k,\beta)$ can be written explicitly as a volume of a certain region involving continuous Gelfand-Tsetlin patterns with constraints, see Section 4 where we provide an explicit formula for it.
\end{thm}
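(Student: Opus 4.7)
The plan is to rewrite the expectation via a character-theoretic identity so that the $k$-fold angular integration collapses to a lattice point count, whose $N\to\infty$ asymptotics are read off by a standard Riemann sum argument. After exchanging expectation with integration in
$$\textnormal{MoM}_N(k,\beta)=\frac{1}{(2\pi)^k}\int_{[0,2\pi]^k}\mathbb{E}\left[\prod_{j=1}^k|P_N(U,\theta_j)|^{2\beta}\right]d\theta_1\cdots d\theta_k,$$
I would factor $|P_N(U,\theta_j)|^{2\beta}=\prod_{a=1}^{\beta}\det(I-e^{-\i\theta_j}U)\det(I-e^{\i\theta_j}U^*)$ and apply the Heine--Szeg\H{o}/Bump--Gamburd identity (a consequence of the dual Cauchy identity together with Schur orthogonality under Haar measure on $\mathbb{U}(N)$) to obtain
$$\mathbb{E}\prod_{j=1}^k|P_N(U,\theta_j)|^{2\beta}=\sum_{\substack{\lambda_1\le N\\ \ell(\lambda)\le k\beta}}\left|s_\lambda\bigl(\underbrace{e^{\i\theta_1},\ldots,e^{\i\theta_1}}_{\beta\text{ times}},\ldots,\underbrace{e^{\i\theta_k},\ldots,e^{\i\theta_k}}_{\beta\text{ times}}\bigr)\right|^2.$$

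Using the Gelfand--Tsetlin pattern expansion of the Schur polynomial, $s_\lambda(x_1,\ldots,x_{k\beta})=\sum_T\prod_r x_r^{|\mu^{(r)}(T)|-|\mu^{(r-1)}(T)|}$ summed over depth-$k\beta$ GT patterns $T$ with top row $\lambda$, the coalescent specialisation above becomes $\sum_T\exp\bigl(\i\sum_j\theta_j\,c_j(T)\bigr)$ with $c_j(T):=|\mu^{(j\beta)}(T)|-|\mu^{((j-1)\beta)}(T)|$ the total weight picked up in the $j$-th block of $\beta$ consecutive rows. Expanding the square and integrating termwise over $(\theta_1,\ldots,\theta_k)\in[0,2\pi]^k$ via orthogonality of characters of the torus collapses the answer to the pure lattice point count
$$\textnormal{MoM}_N(k,\beta)=\sum_{\substack{\lambda_1\le N\\ \ell(\lambda)\le k\beta}}\#\bigl\{(T,T'):\,T,T'\text{ depth-}k\beta\text{ GT patterns with top row }\lambda,\ c_j(T)=c_j(T')\ \forall j\bigr\}.$$

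Finally I would count degrees of freedom and rescale. A pair $(T,T')$ with common top row $\lambda$ is specified by $k^2\beta^2$ integer entries in $\{0,1,\ldots,N\}$ (namely $k\beta$ entries for $\lambda$ plus $k\beta(k\beta-1)/2$ for each pattern given the top), and the $k$ matching conditions $c_j(T)=c_j(T')$ are linear with integer coefficients, of which exactly $k-1$ are independent since $\sum_j c_j(T)=|\lambda|$ holds automatically for both $T$ and $T'$. Rescaling every coordinate by $1/N$ turns the sum into a Riemann sum for the volume of the polytope of pairs of continuous Gelfand--Tsetlin patterns on $[0,1]$ with matching block weights, yielding the leading term $\mathfrak{c}(k,\beta)N^{k^2\beta^2-k+1}$ with $\mathfrak{c}(k,\beta)$ equal to that volume, while the error $O(N^{k^2\beta^2-k})$ is the standard boundary estimate for integer point counts inside a bounded rational polytope. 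The main obstacle I anticipate is bookkeeping rather than any single deep step: the coalescent Bump--Gamburd specialisation is legitimate because both sides are polynomials in the arguments, but one must identify carefully the unique linear dependence among the $k$ matching conditions so that the codimension is exactly $k-1$ (hence the leading power is precisely $k^2\beta^2-k+1$), and verify that the Riemann sum error is uniform as $\lambda$ varies over the truncated Weyl chamber.
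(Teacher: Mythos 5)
Your plan is correct and follows essentially the same route as the paper: Fubini, a Bump--Gamburd identity, expansion of Schur polynomials over Gelfand--Tsetlin patterns, torus orthogonality to extract linear sum constraints, and a convex lattice-point asymptotic. The one place you diverge is in the choice of Bump--Gamburd formulation: you use the version $\sum_{\lambda\subset(N^{k\beta})}|s_\lambda|^2$ with each angle appearing $\beta$ times, so orthogonality leaves you counting pairs of depth-$k\beta$ GT patterns $(T,T')$ sharing a top row, whereas the paper uses the rectangular identity $s_{(N^{k\beta},0^{k\beta})}/\prod_j e^{\i N\beta\theta_j}$ with each angle appearing $2\beta$ times and a single depth-$2k\beta$ pattern, which it then folds into a $k\beta\times k\beta$ interlacing square after stripping off the entries forced to $0$ or $N$ by the rectangular top row. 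The two pictures coincide under the ``cut along the middle row'' bijection, and either way one ends up with $k^2\beta^2$ free integer coordinates in $\{0,\dots,N\}$ subject to $k-1$ independent integer-linear constraints defining a convex polytope; your identification of the one redundant constraint via $\sum_j c_j(T)=|\lambda|$ is exactly the paper's observation that the $j=k$ constraint is superfluous. Your closing ``Riemann sum / boundary estimate'' step should be read as what the paper makes precise by invoking Schmidt's lattice point count theorem for convex regions; that is the only place where your write-up is loose, but it is a standard citation rather than a gap.
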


\begin{rmk}\label{RemarkPoly}
$\textnormal{MoM}_N\left(k,\beta\right)$ is actually a polynomial in the variable $N$, see \cite{BaileyKeating}.
\end{rmk}

\begin{rmk}
It appears to be very hard to obtain the explicit expression for $\mathfrak{c}(k,\beta)$ given in this paper from the one in \cite{BaileyKeating}. In addition to its aesthetic appeal, having this new expression in terms of a volume is important due to connections to the theory of integrable systems and Painleve equations. In particular, for $k=2$ such an expression (obtained in \cite{KeatingRodgersRodittyGershonRudnick}) was used in \cite{BasorGeRubinstein} to relate $\mathfrak{c}(2,\beta)$ to the Painleve V transcendent, see Section 4 for more details.
\end{rmk}

\subsection{Historical overview}
Theorem \ref{MainTheorem} proves a conjecture of Fyodorov and Keating, for integer values of $k$ and $\beta$, on the asymptotics of $\textnormal{MoM}_N\left(k,\beta\right)$. It is also closely related to the following conjecture, see \cite{FyodorovKeating}, on the maximum of the logarithm of the absolute value of the characteristic polynomial $\log|P_N(U,\theta)|$, with $A \in \mathbb{U}(N)$ Haar distributed:
\begin{align*}
\underset{0\le \theta <2\pi}{\max}\log|P_N(U,\theta)|=\log N-\frac{3}{4}\log \log N+\mathfrak{x}_N(A),
\end{align*}
where $\mathfrak{x}_N(A)$ has a limiting distribution that is the sum of two Gumbel random variables. This is because, at least formally as $\beta \to \infty$, the quantities $\textnormal{MoM}_N\left(k,\beta\right)$ for real $k$ determine the distribution of $\underset{0\le \theta <2\pi}{\max}\log|P_N(U,\theta)|$.  There is expected to be a freezing transition at $k\beta^2=1$ which determines the large $\beta$ limit. However, this heuristic picture is far from being fully understood and it remains a major problem to make it rigorous. We refer to \cite{FyodorovKeating}, \cite{BaileyKeating} for further motivation and background on these conjectures.

The case $k=1$ of Theorem \ref{MainTheorem} is classical, see for example \cite{KeatingSnaith}, and $\textnormal{MoM}_N\left(1,\beta\right)$ can be calculated explicitly (in fact this can be done for any real $\beta$) using the celebrated Selberg integral (see \cite{Forrester}):
\begin{align*}
\textnormal{MoM}_N\left(1,\beta\right)=\prod_{1\le i,j \le \beta-1}^{}\left(1+\frac{N}{i+j+1}\right).
\end{align*}
In particular, we have the following expression for the leading order coefficient:
\begin{align*}
\mathfrak{c}(1,\beta)=\prod_{j=0}^{\beta-1}\frac{j!}{(j+\beta)!}.
\end{align*}

In the case $k=2$, the asymptotics of $\textnormal{MoM}_N\left(2,\beta\right)$ for real $\beta$ have been established by Claeys and Krasovsky in \cite{ClaeysKrasovsky} using Riemann-Hilbert problem techniques, in particular by proving a uniform asymptotic formula for Toeplitz determinants with two coalescing Fisher-Hartwig singularities. As a by-product of their approach they are able to obtain a representation for $\mathfrak{c}(2,\beta)$ in terms of the Painleve V transcendent.

For $k=2$ and $\beta \in \mathbb{N}$ Theorem \ref{MainTheorem} is also proven in \cite{KeatingRodgersRodittyGershonRudnick}. In that paper the authors provided two different proofs and also two different expressions for $\mathfrak{c}(2,\beta)$. One of the proofs is complex analytic in nature and makes use of a multiple contour integral expression for $\textnormal{MoM}_N\left(2,\beta\right)$. This approach was then extended in \cite{BaileyKeating} for general $k\in \mathbb{N}$, which obtains Theorem \ref{MainTheorem} with a different expression for $\mathfrak{c}(k,\beta)$, by performing a quite intricate asymptotic analysis of a multidimensional contour integral (in particular the case $k\ge 3$ is considerably harder than $k=2$).

The other proof in \cite{KeatingRodgersRodittyGershonRudnick} is combinatorial in nature and first relates the problem to counting Gelfand-Tsetlin patterns with certain constraints and then equivalently to counting lattice points in certain convex regions of Euclidean space, for which known results on their asymptotics can be used. It is this approach that we extend to general $k\in \mathbb{N}$.  This leads to a new expression for $\mathfrak{c}(k,\beta)$. Surprisingly, unlike in the complex analytic calculation, this new combinatorial approach does not present any significant additional technical difficulties for $k\ge 3$ compared to $k=2$ when it comes to determining the order of the polynomial referred to in Remark \ref{RemarkPoly}. This makes our proof more accessible. However, obtaining a simple formula for $\mathfrak{c}(k,\beta)$ remains a difficulty for $k\ge 3$.

Finally, as we already mentioned the formula coming from the combinatorial approach was used in \cite{BasorPainleveV} to obtain a Hankel determinant representation for $\mathfrak{c}(2,\beta)$.  This provides an alternative route to relating $\mathfrak{c}(2,\beta)$ to the Painleve V equation. We shall explain briefly this argument in Section 4.

\subsection{Organization of the paper}
In Section 2 we give three equivalent combinatorial representations for $\textnormal{MoM}_N\left(k,\beta\right)$: one in terms of semistandard Young tableaux, one in terms of Gelfand-Tsetlin patterns, and finally one in terms of integer arrays/lattice points. In Section 3, making use of a classical theorem on the asymptotics of the number of lattice points in convex regions of Euclidean space, we prove Theorem \ref{MainTheorem}. In Section 4, we study the leading order coefficient $\mathfrak{c}(k,\beta)$ and give an explicit expression for it in terms of volumes of continuous Gelfand-Tsetlin patterns/polytopes with constraints (see \cite{OlshanskiProjections}), which themselves have explicit expressions in terms of B-splines, well-known objects in approximation theory and total positivity, see \cite{CurrySchoenberg},\cite{Karlin}, \cite{OlshanskiProjections}. Finally, we explain, following \cite{BasorPainleveV}, how to relate $\mathfrak{c}(2,\beta)$ to the Painleve V transcendent.

\paragraph{Acknowledgements.} We are grateful to Emma Bailey for discussions.  The research described here was supported by ERC Advanced Grant  740900 (LogCorRM).  JPK also acknowledges support from a Royal Society Wolfson Research Merit Award.  

\section{Equivalent combinatorial representations}

\subsection{Preliminaries on Young tableaux and Gelfand-Tsetlin patterns}
In order to make this note self-contained, we first give the necessary background, following the exposition in \cite{GorinRahman}, on semistandard Young tableaux and Gelfand-Tsetlin patterns. We begin with a number of definitions.
 
\begin{defn}
A partition $\lambda=(\lambda_1,\lambda_2,\dots)$ is a finite sequence of non-negative integers such that $\lambda_1\ge \lambda_2 \ge \lambda_3\ge \dots \ge 0$. We write $|\lambda|=\sum_{i=1}^{\infty}\lambda_i<\infty$ for the size of the partition and $l(\lambda)$ for its length, namely the number of positive $\lambda_i$. We identify a partition with a Young diagram: a left-justified shape consisting of square boxes with $l(\lambda)$ rows with lengths (number of boxes) $\lambda_1,\dots, \lambda_{l(\lambda)}$. Given a pair of Young diagrams $\lambda, \nu$ such that $\lambda$ is contained in $\nu$ we write $\nu \setminus \lambda$ for the cells of $\nu$ that are not in $\lambda$.
\end{defn}

\begin{defn}
A non-negative signature $\lambda$ of length $M$ is a non-negative and non-increasing sequence of integers that has length $M$: $\lambda=(\lambda_1\ge \lambda_2 \ge \dots \ge \lambda_M \ge 0)$. We emphasize the importance of keeping track of the number of trailing zeroes. We will also still write $\lambda$ for the unique Young diagram corresponding to the signature $\lambda$. We denote the set of such signatures by $\mathsf{S}^+_M$. 
\end{defn}

\begin{rmk}
As the name suggests, signatures can involve negative integers as well. Here, our interest in them stems from the fact that we want to keep track of the number of trailing zeroes (which is a feature that distinguishes them from partitions). The terminology "non-negative signature" is taken from Section 4.3 of \cite{BorodinOlshanskiYoungBouquet}.
\end{rmk}

\begin{defn}
A semistandard Young tableau of shape $\lambda$, for some signature $\lambda \in \mathsf{S}^+_M$, is an insertion of the numbers $\{1,2, \dots, M\}$ into the cells of the corresponding Young diagram such that the entries weakly increase along each row and strictly increase along each column (see for example Section 2.1 in \cite{GorinRahman}). We denote the set of such tableaux by $\mathsf{SSYT}(\lambda)$.
\end{defn}

\begin{defn}
We say that two non-negative signatures $\lambda^{(M)}\in \mathsf{S}_M^+$ and $\lambda^{(M+1)}\in \mathsf{S}_{M+1}^+$ interlace and write $\lambda^{(M)} \prec \lambda^{(M+1)}$ if:
\begin{align*}
\lambda_1^{(M+1)}\ge \lambda_1^{(M)}\ge \lambda_2^{(M+1)}\ge \cdots \ge \lambda_{M}^{(M+1)}\ge \lambda_M^{(M)}\ge \lambda_{M+1}^{(M+1)}.
\end{align*}
\end{defn}

\begin{defn}
A non-negative Gelfand-Tsetlin pattern of length/depth $M$ is a sequence of signatures $\{\lambda^{(i)} \}_{i=1}^M$ such that $\lambda^{(i)} \in \mathsf{S}^+_i$ and:
\begin{align*}
\lambda^{(1)}\prec \lambda^{(2)}\prec \lambda^{(3)}\prec \cdots \prec \lambda^{(M-1)}\prec \lambda^{(M)}.
\end{align*}
We write $\mathsf{GT}_M^+$ for the set of all such patterns and $\mathsf{GT}_M^+(\nu)$ for the ones with fixed top signature $\nu \in \mathsf{S}_M^+$.
\end{defn}

Let $\nu \in \mathsf{S}_M^+$. It is well-known, see for example Section 2 in \cite{GorinRahman}, that there exists a bijection 
$\mathfrak{B}$ between $\mathsf{GT}_M^+(\nu)$ and $\mathsf{SSYT}(\nu)$. This is described as follows:
\begin{itemize}
\item Given $(\lambda^{(1)}\prec \lambda^{(2)} \prec \cdots \prec \lambda^{(M)}=\nu) \in \mathsf{GT}_M^+(\nu)$, the corresponding tableau $t \in \mathsf{SSYT}(\nu)$ is obtained by inserting value $i \le M$ into the cells of $\lambda^{(i)} \setminus \lambda^{(i-1)}$. In case this is empty then $i$ is not inserted.
\item Conversely, given $t \in \mathsf{SSYT}(\nu)$ the corresponding Gelfand-Tsetlin pattern  $(\lambda^{(1)}\prec \lambda^{(2)} \prec \cdots \prec \lambda^{(M)}=\nu) \in \mathsf{GT}_M^+(\nu)$ is obtained as follows. Set $\lambda^{(i)}$ to be the Young diagram consisting of the cells of $t$ with entries $\le i$ and removing trailing zeroes to ensure that $\lambda^{(i)}\in \mathsf{S}_i^+$.
\end{itemize}

\subsection{Semistandard Young tableaux and Gelfand-Tsetlin pattern representation}

Assume $k,\beta \in \mathbb{N}$. Our starting point is the following result, which is already implicit in the investigation in \cite{BaileyKeating}. We give a proof for completeness. 
\begin{prop}
Let $k,\beta \in \mathbb{N}$. $\textnormal{MoM}_N\left(k,\beta\right)$ is equal to the number of semistandard Young tableaux of shape $(N,\dots,N,0,\dots,0)$ where each of the strings of $N$'s and $0$'s is of length $k\beta$ and which moreover satisfy the extra condition that there have to be $N\beta$ entries from each of the sets
\begin{align}\label{SemistandardConstraint}
\{2 \beta(j-1)+1,\dots, 2j \beta \}, \ \textnormal{ for } j \in \{1, \dots, k\}.
\end{align}
\end{prop}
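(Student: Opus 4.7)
The plan is to re-express $\textnormal{MoM}_N(k,\beta)$ as a coefficient in a dual-Cauchy expansion and then identify it with an SSYT count via Schur orthogonality. The crucial first step is to convert $|P_N(U,\theta)|^{2\beta}$ from a Laurent polynomial to a genuine polynomial in the eigenvalues. Using $|z_i|=1$, one has the pointwise identity
\begin{equation*}
(1-z_ie^{-\i\theta})(1-\bar{z}_ie^{\i\theta})=-z_i^{-1}e^{\i\theta}(1-z_ie^{-\i\theta})^2,
\end{equation*}
and taking the product over $i=1,\dots,N$ and then $\beta$-th powers gives
\begin{equation*}
|P_N(U,\theta)|^{2\beta}=(-1)^{N\beta}\det(U)^{-\beta}e^{\i N\beta\theta}P_N(U,\theta)^{2\beta}.
\end{equation*}

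Next, I would expand $\prod_{j=1}^k P_N(U,\theta_j)^{2\beta}$ by dual Cauchy. Introduce $2k\beta$ spectral variables $y_1,\dots,y_{2k\beta}$ grouped into $k$ blocks of size $2\beta$, with $y_m=e^{-\i\theta_j}$ whenever $m$ lies in the $j$-th block. Then
\begin{equation*}
\prod_{j=1}^k P_N(U,\theta_j)^{2\beta}=\prod_{i=1}^N\prod_{m=1}^{2k\beta}(1-z_iy_m)=\sum_{\lambda\subseteq(2k\beta)^N}(-1)^{|\lambda|}s_\lambda(z)s_{\lambda'}(y).
\end{equation*}
Expanding $s_{\lambda'}(y)=\sum_{T\in\mathsf{SSYT}(\lambda')}y^T$, every term carries a phase $\exp(-\i\sum_j n_j(T)\theta_j)$, where $n_j(T)$ counts the entries of $T$ lying in the $j$-th block, i.e., taking values in $\{2\beta(j-1)+1,\dots,2j\beta\}$.

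Finally I would perform the two integrations. Combined with the prefactor $e^{\i N\beta\sum_j\theta_j}$, the $\theta_j$-integrations retain only those $T$ with $n_j(T)=N\beta$ for every $j$. For the Haar expectation the only $U$-dependence is $\det(U)^{-k\beta}s_\lambda(z)=\overline{s_{(k\beta)^N}(U)}\,s_\lambda(z)$, and Schur orthogonality on $\mathbb{U}(N)$ picks out $\lambda=(k\beta)^N$. Its transpose $\lambda'=(N^{k\beta})$, viewed as a signature in $\mathsf{S}_{2k\beta}^+$ after padding with trailing zeros, is exactly $(N,\dots,N,0,\dots,0)$ with strings of length $k\beta$ as in the statement. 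The accumulated sign is $(-1)^{Nk\beta}(-1)^{|\lambda|}=(-1)^{2Nk\beta}=1$, and the claimed SSYT count follows.

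The main -- and really only -- subtlety is this initial rewriting of $|P_N|^{2\beta}$: without replacing the Laurent polynomial by a product of a genuine polynomial and an explicit $\det(U)^{-k\beta}$ twist, the Haar integral cannot be collapsed to a single Kronecker delta by Schur orthogonality, and the SSYT bookkeeping never materialises. Once that rewriting is in hand, the rest is a mechanical unwinding of dual Cauchy, the SSYT expansion of Schur polynomials, and the $\mathbb{U}(N)$ orthogonality relations.
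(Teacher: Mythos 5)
Your proof is correct, and it takes a genuinely different route from the paper. The paper's proof is very short: it applies Fubini, then cites the Bump--Gamburd formula
\begin{align*}
\mathbb{E}\left(\prod_{j=1}^{k} |P_N\left(U,\theta_j\right)|^{2\beta}\right)=\frac{s_{(N^{k\beta},0^{k\beta})}\left(e^{\i \theta_1},\dots,e^{\i \theta_1},\dots, e^{\i \theta_k},\dots, e^{\i \theta_k}\right)}{\prod_{j=1}^{k}e^{\i N \beta \theta_j} }
\end{align*}
as a black box, and then expands the single Schur polynomial as a sum over semistandard Young tableaux and integrates over the $\theta_j$'s. Your argument instead inlines a proof of this identity: the pointwise rewriting $(1-z_ie^{-\i\theta})(1-\bar z_ie^{\i\theta})=-z_i^{-1}e^{\i\theta}(1-z_ie^{-\i\theta})^2$ converts the Laurent polynomial into $\det(U)^{-\beta}e^{\i N\beta\theta}P_N^{2\beta}$, the dual Cauchy identity expands the resulting genuine polynomial as $\sum_{\lambda\subseteq(2k\beta)^N}(-1)^{|\lambda|}s_\lambda(z)s_{\lambda'}(y)$, Schur orthogonality on $\mathbb{U}(N)$ collapses the Haar integral to the single term $\lambda=(k\beta)^N$ (so $\lambda'=(N^{k\beta},0^{k\beta})$ in $\mathsf{S}_{2k\beta}^+$), and the $\theta_j$-integrations pick out the SSYT $T$ with $n_j(T)=N\beta$. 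The signs are tracked correctly and cancel, and the apparent discrepancy between your $y_m=e^{-\i\theta_j}$ and the paper's $e^{\i\theta_j}$ is reconciled by the standard identity $s_\nu(z^{-1})=(\prod z_m)^{-N}s_\nu(z)$ for the rectangular signature $\nu=(N^{k\beta},0^{k\beta})$, which matches the remaining $\prod_j e^{\i N\beta\theta_j}$ prefactor. What the paper's route buys is brevity via an external citation; what your route buys is self-containment and a transparent view of exactly where the $\det(U)^{-k\beta}$ twist comes from and why Schur orthogonality singles out the rectangular shape — which is indeed the heart of Bump--Gamburd.
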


\begin{proof}
First, using Fubini's theorem we obtain:
\begin{align*}
\textnormal{MoM}_N\left(k,\beta\right)=\left(\frac{1}{2\pi}\right)^k\int_{0}^{2\pi}\cdots \int_{0}^{2\pi}\mathbb{E}\left(\prod_{j=1}^{k} |P_N\left(U,\theta_j\right)|^{2\beta}\right)d\theta_1\cdots d\theta_k.
\end{align*}
Now, recall that we have the following representation due to Bump and Gamburd, see \cite{BumpGamburd}, of the integrand in terms of the Schur polynomials $s_{\lambda}(\mathbf{z})$:
\begin{align*}
\mathbb{E}\left(\prod_{j=1}^{k} |P_N\left(U,\theta_j\right)|^{2\beta}\right)=\frac{s_{(N,\dots,N,0,\dots,0)}\left(e^{\i \theta_1},\dots,e^{\i \theta_1},e^{\i \theta_2},\dots, e^{\i \theta_2},\dots, e^{\i \theta_k},\dots, e^{\i \theta_k}\right)}{\prod_{j=1}^{k}e^{\i N \beta \theta_j} }
\end{align*}
where each of the strings of $N$'s and $0$'s in the signature is of length $k\beta$ and each variable $e^{\i \theta_l}$ for $1\le l \le k$ appears $2\beta$ times.

 Then, using the well-known combinatorial formula (sum over tableaux) for Schur polynomials indexed by $\lambda\in \mathsf{S}_M^+$:
\begin{align*}
s_{\lambda}(z_1,\dots,z_M)=\sum_{t \in \mathsf{SSYT}(\lambda)}^{}z_1^{\textnormal{number of } 1's \textnormal{ in } t} z_2^{\textnormal{number of } 2's \textnormal{ in } t} \cdots z_M^{\textnormal{number of } M's \textnormal{ in } t}
\end{align*}
in the integral we obtain the result.
\end{proof}

We have the following equivalent representation for $\textnormal{MoM}_N\left(k,\beta\right)$ in terms of Gelfand-Tsetlin patterns:

\begin{prop}\label{GelfandProp}
Let $k,\beta \in \mathbb{N}$. $\textnormal{MoM}_N\left(k,\beta\right)$ is equal to the number of Gelfand-Tsetlin patterns of depth $2k\beta$ and top row $\lambda^{(2k\beta)}= \left(N,\dots,N,0,\dots,0\right)$ where each of the strings of $N$'s and $0$'s is of length $k\beta$, and which moreover satisfy the following constraints
\begin{align}\label{GelfandTsetlinConstraint}
\lambda_1^{(2j\beta)}+\cdots+\lambda_{2j\beta}^{(2j\beta)}=Nj\beta, \ \textnormal{ for } j \in \{1, \dots, k\}.
\end{align}
We denote the set of such Gelfand-Tsetlin patterns by $\mathcal{GT}\left(N;k;\beta\right)$.
\end{prop}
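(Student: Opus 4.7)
The plan is to deduce this proposition directly from the previous one by applying the bijection $\mathfrak{B}$ between $\mathsf{SSYT}(\nu)$ and $\mathsf{GT}_M^+(\nu)$ recalled in Section 2.1, with $M=2k\beta$ and $\nu = (N,\dots,N,0,\dots,0)$. The previous proposition already identifies $\textnormal{MoM}_N(k,\beta)$ with the cardinality of a subset of $\mathsf{SSYT}(\nu)$ cut out by the block conditions \eqref{SemistandardConstraint}, so the task reduces to checking that $\mathfrak{B}$ maps this subset onto the set $\mathcal{GT}(N;k;\beta)$ defined by \eqref{GelfandTsetlinConstraint}.

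The key observation is the following dictionary, which is essentially built into the bijection: for $t\in\mathsf{SSYT}(\nu)$ and the associated pattern $(\lambda^{(1)}\prec\cdots\prec\lambda^{(2k\beta)}=\nu)=\mathfrak{B}^{-1}(t)$, the signature $\lambda^{(i)}$ is (after removing trailing zeros) the Young diagram formed by the cells of $t$ with entries $\le i$. Consequently,
\begin{align*}
|\lambda^{(i)}| \;=\; \#\{\text{cells of } t \text{ with entry } \le i\},
\end{align*}
and therefore the number of entries of $t$ lying in the block $\{2\beta(j-1)+1,\dots,2j\beta\}$ equals $|\lambda^{(2j\beta)}|-|\lambda^{(2(j-1)\beta)}|$, with the convention $|\lambda^{(0)}|=0$.

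Given this, I would carry out the proof in three short steps. First, restate the previous proposition as: $\textnormal{MoM}_N(k,\beta)$ counts the $t\in\mathsf{SSYT}(\nu)$ with $|\lambda^{(2j\beta)}|-|\lambda^{(2(j-1)\beta)}|=N\beta$ for every $j\in\{1,\dots,k\}$. Second, apply $\mathfrak{B}^{-1}$ and use the dictionary above to translate this into the system of linear conditions on the row sums of the Gelfand-Tsetlin pattern. Third, telescope: summing the equalities $|\lambda^{(2j\beta)}|-|\lambda^{(2(j-1)\beta)}|=N\beta$ over $j'=1,\dots,j$ gives $|\lambda^{(2j\beta)}|=Nj\beta$, which is precisely the condition \eqref{GelfandTsetlinConstraint}; conversely, the constraints \eqref{GelfandTsetlinConstraint} imply, by differencing consecutive ones, the block-count constraints \eqref{SemistandardConstraint}.

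There is no substantive obstacle here; the only thing to be careful about is the bookkeeping of trailing zeroes, since the signatures $\lambda^{(i)}$ live in $\mathsf{S}_i^+$ rather than being ordinary partitions. This is exactly the reason why signatures (rather than partitions) appear in the definition of $\mathsf{GT}_M^+(\nu)$, and it ensures both that the top signature $\lambda^{(2k\beta)}=(N,\dots,N,0,\dots,0)$ has the prescribed number of trailing zeros and that the bijection $\mathfrak{B}$ is well-defined on the nose without quotienting. Once this point is noted, the proposition follows.
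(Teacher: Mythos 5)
Your proposal is correct and takes essentially the same approach as the paper, which simply applies the bijection $\mathfrak{B}$ and notes that the constraints \eqref{SemistandardConstraint} and \eqref{GelfandTsetlinConstraint} correspond under it. You have merely spelled out the (one-line) observation the paper leaves implicit — namely that $|\lambda^{(i)}|$ counts entries $\le i$ of the tableau, so the block counts telescope to the partial row sums.
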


\begin{proof}
We apply the bijection $\mathfrak{B}$. It suffices to observe that the constraints (\ref{SemistandardConstraint}) at the tableaux level get mapped to the constraints (\ref{GelfandTsetlinConstraint}) at the Gelfand-Tsetlin pattern level and vice-versa
\end{proof}

\begin{rmk}
Observe that for $j=k$ the constraint (\ref{GelfandTsetlinConstraint}) on the sum is superfluous (as it clearly was for the constraint (\ref{SemistandardConstraint}) at the tableaux level). 
\end{rmk}

\subsection{Integer array representation}

We now make a final translation of the problem, based on a simple observation that is however the crux of the argument. We need a couple of definitions.

\begin{defn}\label{InterlacingSquare}
Let $\mathfrak{I}_m\subset{\mathbb{R}}^{m^2}$ denote the collection of real $m\times m$ matrices whose entries are non-decreasing along rows and non-increasing along columns. More precisely, for $(x_i^{(j)})_{i,j=1}^m\in \mathfrak{I}_m$ (here and throughout the subscript denotes the column and the superscript in parentheses the row) we have: 
\begin{align*}
x_1^{(j)} \le x_2^{(j)}\le \cdots  \le x_{m}^{(j)}, \ \textnormal{ for } j \in \{1, \dots,m\}.\\
x_i^{(1)} \ge x_i^{(2)}\ge \cdots  \ge x_{i}^{(m)}, \ \textnormal{ for } i \in \{1, \dots,m\}.
\end{align*}
\end{defn}

We define a certain set of integer arrays with constraints:

\begin{defn}
Let $\mathcal{M}_N(k,\beta)$ be the set of integer arrays $x=\left(x_i^{(j)}\right)_{i,j=1}^{k\beta} \in \mathbb{Z}^{(k\beta)^2}$ such that:
\begin{itemize}
\item[(1)] $0\le x_i^{(j)} \le N$, for $1\le i,j \le k\beta$,
\item[(2)] for $l=1, \dots, \left\lfloor\frac{k}{2} \right\rfloor$:
\begin{align*}
x_1^{(2\beta l)}+x_2^{(2\beta l-1)}+\cdots +x_{2\beta l}^{(1)}&=l\beta N,\\
x_{k\beta-2\beta l+1}^{(k\beta)}+x_{k\beta-2\beta l+2}^{(k\beta-1)}+\cdots+ x_{k\beta}^{(k\beta-2\beta l+1)}&=l\beta N,
\end{align*}
\item[(3)] the matrix $\left(x_i^{(j)}\right)_{i,j=1}^{k\beta}$ is in the set $\mathfrak{I}_{k\beta}$.
\end{itemize}
Note that, there is a total number of $k-1$ constraints in $(2)$ above (in case $k$ is even, the two constraints for $l=\lfloor\frac{k}{2} \rfloor=\frac{k}{2}$ coincide).
\end{defn}

\begin{figure}
\begin{tikzpicture}

\draw [thick] (0,0) rectangle (5,5);
\draw [thick,red] (0,3) -- (2,5);
\draw [thick,red] (0,1) -- (4,5);
\draw [thick,red] (1,0) -- (5,4);
\draw [thick,red] (3,0) -- (5,2);
\draw [<->] (-0.5,4.75) -- (-0.5,3.25);
\node[left] at (-0.5,4) {$2\beta$};
\draw [<->] (-0.5,2.75) -- (-0.5,1.25);
\node[left] at (-0.5,2) {$2\beta$};
\draw [<->] (1.25,-0.5) -- (2.75,-0.5);
\node[below] at (2,-0.5) {$2\beta$};
\draw [<->] (3.25,-0.5) -- (4.75,-0.5);
\node[below] at (4,-0.5) {$2\beta$};

\draw[<->] (0.2,4.8) -- (0.8,4.2);
\node[above right] at (0.5,4.5) {$2\beta$};

\draw[<->] (1.2,3.8) -- (1.8,3.2);
\node[above right] at (1.5,3.5) {$2\beta$};

\draw[<->] (2.2,2.8) -- (2.8,2.2);
\node[above right] at (2.5,2.5) {$2\beta$};

\draw[<->] (3.2,1.8) -- (3.8,1.2);
\node[above right] at (3.5,1.5) {$2\beta$};

\draw[<->] (4.2,0.8) -- (4.8,0.2);
\node[above right] at (4.5,0.5) {$2\beta$};

\end{tikzpicture}

\caption{An element of $\mathcal{M}_N(k,\beta)$, with $k=5$. The labels $2\beta$ denote the number of rows/columns/diagonals along the $\longleftrightarrow$ arrows. The sum constraints in Part (2) of the definition of $\mathcal{M}_N(k,\beta)$ are on the red diagonals. There is a total of $k-1=4$ such constraints.}\label{Figure1}
\end{figure}

We now observe that there is a natural bijection, which we denote by $\mathfrak{S}_{(N;k;\beta)}$, between $\mathcal{GT}\left(N;k;\beta\right)$ and $\mathcal{M}_N\left(k,\beta\right)$. This can be seen as follows. It is an important observation that the form of $\lambda^{(2k\beta)}=(N,\dots,N,0,\dots,0)$ introduces a large number of constraints, see Figure \ref*{Figure2} for an illustration. Let us first look at $\lambda^{(2k\beta-1)}$. By the interlacing $\lambda^{(2k\beta-1)}\prec \lambda^{(2k\beta)}$, we get that there is only one free coordinate in $\lambda^{(2k\beta-1)}$:
\begin{align*}
\lambda_1^{(2k\beta-1)}, \dots , \lambda_{k\beta-1}^{(2k\beta-1)}\equiv N,\\
\lambda_{k\beta+1}^{(2k\beta-1)}, \dots , \lambda_{2k\beta-1}^{(2k\beta-1)}\equiv 0,\\
0 \le \lambda^{(2k\beta-1)}_{k\beta} \le N.
\end{align*}
We relabel $x_1^{(1)}=\lambda_{k\beta}^{(2k\beta-1)}$. Looking at $\lambda^{(2k\beta-2)}$, again due to the interlacing $\lambda^{(2k\beta-2)}\prec \lambda^{(2k\beta-1)}$, we see that there are only two free coordinates:
\begin{align*}
\lambda_1^{(2k\beta-2)},\dots, \lambda_{k\beta-2}^{(2k\beta-2)}\equiv N,\\
\lambda_{k\beta+1}^{(2k\beta-2)},\dots, \lambda_{2k\beta-2}^{(2k\beta-2)}\equiv 0,\\
0\le \lambda_{k\beta-1}^{(2k\beta-2)},\lambda_{k\beta}^{(2k\beta-2)}\le N,
\end{align*}
which moreover satisfy:
\begin{align*}
\lambda_{k\beta}^{(2k\beta-2)} \le \lambda_{k\beta}^{(2k\beta-1)}=x_1^{(1)}\le \lambda_{k\beta-1}^{(2k\beta-2)}.
\end{align*}
We relabel them as follows:
\begin{align*}
x_1^{(2)}=\lambda_{k\beta}^{(2k\beta-2)}, \ x_2^{(1)}=\lambda_{k\beta-1}^{(2k\beta-2)}.
\end{align*}
Continuing in this fashion, by relabelling the non-fixed coordinates of the signatures $\{\lambda^{(i)}\}_{i=1}^{2k\beta}$, we obtain the desired bijection $\mathfrak{S}_{(N;k;\beta)}$, which we formalize shortly. Observe that, under this relabelling the interlacing constraints in $\mathcal{GT}(N;k;\beta)$ exactly correspond to the constraints in Part (3) of Definition \ref{InterlacingSquare}.  We also note that the constraints $(\ref{GelfandTsetlinConstraint})$ are easily seen to correspond to the sum constraints in the definition of $\mathcal{M}_N(k,\beta)$. Finally, observe that after $k\beta$ steps of this process (of relabelling starting from the top signature), none of the coordinates are necessarily fixed to be $0$ or $N$.

\begin{figure}

 \begin{tikzpicture}
   
   \node[red] at (0,10) {$\bf{0}$};
   \node[red] at (1,10) {$\bf{0}$};
   \node[red] at (2,10) {$\cdots$};
     \node[red] at (3,10) {$\bf{0}$};
       \node[red] at (4,10) {$\bf{N}$};
    \node[red] at (5,10) {$\cdots$};
  \node[red] at (6,10) {$\bf{N}$};
     \node[red] at (7,10) {$\bf{N}$};
        \node[red] at (0.5,9) {$\bf{0}$};
        \node[red] at (1.5,9) {$\cdots$};
          \node[red] at (2.5,9) {$\bf{0}$};
        \node[] at (3.5,9) {$\lambda_{k\beta}^{(2k\beta-1)}$};
            \node[red] at (4.5,9) {$\bf{N}$};
         \node[red] at (5.5,9) {$\cdots$};
       \node[red] at (6.5,9) {$\bf{N}$};
       
           \node[red] at (1,8) {$\bf{0}$};
                           \node[red] at (1.75,8) {$\cdots$};
                           \node[red] at (2.25,8) {$\bf{0}$};
               \node[] at (3,8) {$\lambda_{k\beta}^{(2k\beta-2)}$};
               \node[] at (4,8) {$\lambda_{k\beta-1}^{(2k\beta-2)}$};
                   \node[red] at (4.75,8) {$\bf{N}$};
                   
                \node[red] at (5.5,8) {$\cdots$};
              \node[red] at (6,8) {$\bf{N}$};
              
               \node[] at (2,7) {$\vdots$};
              \node[] at (3.25,7) {$\vdots$};
              \node[] at (4.5,7) {$\vdots$};
              
             \node[red] at (1.25,6) {$\bf{0}$};
             \node[] at (2.25,6) {$\lambda_{k\beta}^{(k\beta+1)}$};
              \node[] at (3.25,6) {$\cdots$};
            \node[] at (4.5,6) {$\lambda_{2}^{(k\beta+1)}$};
            \node[red] at (5.5,6) {$\bf{N}$};

            \node[] at (1.5,5) {$\lambda_{k\beta}^{(k\beta)}$};
             \node[] at (2.5,5) {$\lambda_{k\beta-1}^{(k\beta)}$};
              \node[] at (3.25,5) {$\cdots$};
          \node[] at (4,5) {$\lambda_{2}^{(k\beta)}$};    
           \node[] at (5,5) {$\lambda_{1}^{(k\beta)}$};  
              
             \node[] at (2,4) {$\lambda_{k\beta-1}^{(k\beta-1)}$};
               \node[] at (3.25,4) {$\cdots$};
             \node[] at (4.5,4) {$\lambda_{1}^{(k\beta-1)}$};  
             
              \node[] at (3.25,3) {$\vdots$};
                            \node[] at (2.5,3) {$\vdots$};
                            \node[] at (4,3) {$\vdots$};
                            
                          \node[] at (2.75,2) {$\lambda_2^{(2)}$};
                          \node[] at (3.75,2) {$\lambda_1^{(2)}$};
            \node[] at (3.25,1) {$\lambda_1^{(1)}$};
   \end{tikzpicture}

\caption{An element of $\mathcal{GT}(N;k;\beta)$. Observe that, there is a number of fixed coordinates, due to the interlacing, at $0$ and $N$ (for any element in $\mathcal{GT}(N;k;\beta)$). Moreover, every $2\beta$ signatures there is a sum constraint of the form (\ref{GelfandTsetlinConstraint}).}\label{Figure2}
\end{figure}
Formally, the bijection $\mathfrak{S}_{(N;k;\beta)}$ (essentially a relabelling) is given as follows:
\begin{align*}
\mathfrak{S}_{(N;k;\beta)}:\mathcal{GT}\left(N;k;\beta\right) &\longrightarrow \mathcal{M}_N\left(k,\beta\right)\\
\lambda_{k\beta-i+1}^{(2k\beta-j)}&\mapsto x_i^{(j-i+1)}, \ j=1,\dots,k\beta-1;i=1,\dots,j;\\
\lambda_{k\beta+2-j-i}^{(k\beta+1-j)}&\mapsto x_{i+j-1}^{(k\beta-i+1)}, j=1,\dots,k\beta; i=1,\dots, k\beta+1-j.
\end{align*}
It is important to note that, as illustrated in Figure \ref*{Figure2}, for $j=1,\dots,k\beta-1$ we have:
\begin{align*}
\lambda_1^{(2k\beta-j)},\dots,\lambda_{k\beta-j}^{(2k\beta-j)}\equiv N,\\
\lambda_{k\beta+1}^{(2k\beta-j)},\dots,\lambda_{2k\beta-j}^{(2k\beta-j)}\equiv 0.
\end{align*}
It is immediate, by simply writing the sums out, that the constraints (\ref{GelfandTsetlinConstraint}) exactly correspond to the sum constraint in Part (2) of the definition of $\mathcal{M}_N(k,\beta)$.

Pictorially, see Figure \ref*{Figure3}, the bijection $\mathfrak{S}_{(N;k;\beta)}$ between $\mathcal{GT}\left(N;k;\beta\right)$ and $\mathcal{M}_N\left(k,\beta\right)$ goes as follows: the non-fixed coordinates (from top to bottom) of a Gelfand-Tsetlin pattern in $\mathcal{GT}\left(N;k;\beta\right)$ are obtained by reading a unique array in $\mathcal{M}_N\left(k,\beta\right)$ sequentially along each diagonal (in the north-east direction), from the top-left to the bottom-right corner and vice versa (as indicated by the blue arrow in Figure \ref*{Figure3}).

\begin{figure}

\scalebox{0.9}{
\begin{tikzpicture}

\draw[ultra thick]   (5,0) -- (1,5);
\draw[ultra thick]   (5,10) -- (1,5);
\draw[ultra thick]   (5,10) -- (9,5);
 \draw[ultra thick]   (5,0) -- (9,5);

 \draw [fill=lightgray] (1,5) -- (1,10)-- (5,10)--(1,5); 
 
  \draw [fill=lightgray] (9,5) -- (9,10)-- (5,10)--(9,5); 
  \node[] at (2,8) {$\mathbf{0}$};
  \node[above] at (2,8.1) {$\vdots$};
    \node[below] at (2,8) {$\vdots$};
    \node[left] at (2,8) {$\cdots$};
        \node[right] at (2,8) {$\cdots$};
        
     \node[] at (8,8) {$\mathbf{N}$};
     \node[above] at (8,8.1) {$\vdots$};
       \node[below] at (8,8) {$\vdots$};
       \node[left] at (8,8) {$\cdots$};
           \node[right] at (8,8) {$\cdots$};     
           
   \draw[ultra thick,red]   (3.4,8) -- (6.6,8);         
     \draw[fill,green] (4.2,8) circle [radius=0.15];   
       \draw[ultra thick,red]   (3.4,2) -- (6.6,2);         
            \draw[fill,green] (3,2.5) circle [radius=0.15];   
      \draw[ultra thick,red]   (1.8,4) -- (8.2,4);            
      
\draw[ultra thick,red]   (1.8,6) -- (8.2,6);            

\draw[ultra thick, blue,->] (5,10)-- (5,8.5);

\draw [<->] (5,1.8) -- (5,0.2);
\node[right] at (5,1){$2\beta$};
\draw [<->] (5,2.2) -- (5,3.8);
\node[right] at (5,3){$2\beta$};
\draw [<->] (5,4.2) -- (5,5.8);
\node[right] at (5,5){$2\beta$};
\draw [<->] (5,6.2) -- (5,7.8);
\node[right] at (5,7){$2\beta$};
\draw [<->] (1,0.25) -- (1,4.75);
\node[left] at (1,2.5){$k\beta$};
\end{tikzpicture}
\begin{tikzpicture}

\draw [ultra thick] (0,0) rectangle (5,5);
\draw [ultra thick,red] (0,3) -- (2,5);
\draw[fill,green] (0.5,3.5) circle [radius=0.15];   
\draw[fill,green] (2.5,0) circle [radius=0.15];   
\draw [ultra thick,red] (0,1) -- (4,5);
\draw [ultra thick,red] (1,0) -- (5,4);
\draw [ultra thick,red] (3,0) -- (5,2);
\draw [<->] (-0.5,4.75) -- (-0.5,3.25);
\node[left] at (-0.5,4) {$2\beta$};
\draw [<->] (-0.5,2.75) -- (-0.5,1.25);
\node[left] at (-0.5,2) {$2\beta$};
\draw [<->] (1.25,-0.5) -- (2.75,-0.5);
\node[below] at (2,-0.5) {$2\beta$};
\draw [<->] (3.25,-0.5) -- (4.75,-0.5);
\node[below] at (4,-0.5) {$2\beta$};

\draw [<->] (0.25,5.5) -- (4.75,5.5);
\node[above] at (2.5,5.5) {$k\beta$};

\draw[->,ultra thick, blue] (0,5) -- (0.8,4.2);

\draw[<->] (1.2,3.8) -- (1.8,3.2);
\node[above right] at (1.5,3.5) {$2\beta$};

\draw[<->] (2.2,2.8) -- (2.8,2.2);
\node[above right] at (2.5,2.5) {$2\beta$};

\draw[<->] (3.2,1.8) -- (3.8,1.2);
\node[above right] at (3.5,1.5) {$2\beta$};

\draw[<->] (4.2,0.8) -- (4.8,0.2);
\node[above right] at (4.5,0.5) {$2\beta$};

\end{tikzpicture}
}

\caption{An element of $\mathcal{GT}(N;k;\beta)$ and its image, under the relabelling, $\mathfrak{S}_{(N;k;\beta)}$ in $\mathcal{M}_N(k,\beta)$. The shaded regions correspond to the fixed coordinates at $0$ and $N$. The red horizontal lines (signatures) correspond to the constraints $(\ref{GelfandTsetlinConstraint})$ and get mapped under $\mathfrak{S}_{(N;k;\beta)}$ to the diagonal red lines of the element in $\mathcal{M}_N(k,\beta)$. Also indicated is how the two green points (coordinates $\lambda_i^{(j)}$ for some $i,j$) get mapped under $\mathfrak{S}_{(N;k;\beta)}$}\label{Figure3}

\end{figure}

Thus, by applying $\mathfrak{S}_{(N;k;\beta)}$ and using Proposition \ref{GelfandProp} we obtain:
\begin{prop}\label{LatticePointRepProp}
Let $k,\beta \in \mathbb{N}$. $\textnormal{MoM}_N\left(k,\beta\right)=\#\mathcal{M}_N\left(k,\beta\right)$.
\end{prop}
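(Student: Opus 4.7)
The proof is essentially a bookkeeping exercise built on the bijection $\mathfrak{S}_{(N;k;\beta)}$ laid out in the paragraphs preceding the proposition. The plan is to apply $\mathfrak{S}_{(N;k;\beta)}$ and combine with Proposition \ref{GelfandProp}, which already gives $\textnormal{MoM}_N(k,\beta)=\#\mathcal{GT}(N;k;\beta)$. Thus everything reduces to verifying that $\mathfrak{S}_{(N;k;\beta)}$ is a well-defined bijection between $\mathcal{GT}(N;k;\beta)$ and $\mathcal{M}_N(k,\beta)$.

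First, I would show that $\mathfrak{S}_{(N;k;\beta)}$ maps into $\mathcal{M}_N(k,\beta)$. The key preliminary observation, illustrated in Figure \ref{Figure2}, is that the shape of $\lambda^{(2k\beta)}=(N,\dots,N,0,\dots,0)$ forces, via the interlacing, the identities $\lambda_1^{(2k\beta-j)}=\cdots=\lambda_{k\beta-j}^{(2k\beta-j)}=N$ and $\lambda_{k\beta+1}^{(2k\beta-j)}=\cdots=\lambda_{2k\beta-j}^{(2k\beta-j)}=0$ for $j=1,\dots,k\beta-1$. I would prove this by downward induction on $j$: at each step the interlacing with the row above pins down the first $k\beta-j$ entries to $N$ and the trailing entries to $0$, leaving exactly $j+1$ free entries at level $2k\beta-j$. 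Together with the $k\beta$ free entries at each level $\le k\beta$, this gives a total of $(k\beta)^2$ free coordinates, matching the size of a $k\beta \times k\beta$ matrix.

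Next, I would check the three defining conditions of $\mathcal{M}_N(k,\beta)$ under the relabelling $\mathfrak{S}_{(N;k;\beta)}$: (1) all free coordinates lie in $[0,N]$, which is immediate from the interlacing against the boundary values; (3) the $\mathfrak{I}_{k\beta}$ conditions (rows non-decreasing, columns non-increasing) are precisely the unpacking of the interlacing inequalities between consecutive signatures once we track which free entry becomes $x_i^{(j)}$ — here one carefully unwinds the two cases in the definition of $\mathfrak{S}_{(N;k;\beta)}$ (the upper-triangular part corresponding to rows $2k\beta-1,\dots,k\beta+1$, and the lower-triangular part corresponding to rows $k\beta,\dots,1$); (2) the sum constraint (\ref{GelfandTsetlinConstraint}) at depth $2j\beta$ involves summing all entries of $\lambda^{(2j\beta)}$, and since the entries above depth $k\beta$ have a block of $N$'s and a block of $0$'s while the free entries at that depth are read out along one of the red diagonals of Figure \ref{Figure1}, the sum translates into the diagonal sum condition in (2) (and similarly for the symmetric constraints coming from levels below the middle).

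The inverse map is constructed by simply reading the $x_i^{(j)}$ values back into the Gelfand-Tsetlin pattern along diagonals in the northeast direction, as depicted in Figure \ref{Figure3}; that this reverses $\mathfrak{S}_{(N;k;\beta)}$ is formal. The only step requiring genuine care, and the one I expect to occupy the bulk of the writeup, is the translation of the sum identities (\ref{GelfandTsetlinConstraint}) into the diagonal sums in part (2) of the definition of $\mathcal{M}_N(k,\beta)$: one must carefully identify, for each $j\in\{1,\dots,k-1\}$, which $N$'s and $0$'s occupy which positions at depth $2j\beta$ and then show that the remaining free entries form exactly the diagonal $x_1^{(2\beta l)}+x_2^{(2\beta l-1)}+\cdots+x_{2\beta l}^{(1)}$ (or its mirror image for $j>k/2$), with the leftover fixed entries summing to exactly $(j-l)\beta N$ for the correct choice of $l$. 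Once these identities are verified, the proposition follows immediately from $\textnormal{MoM}_N(k,\beta)=\#\mathcal{GT}(N;k;\beta)=\#\mathcal{M}_N(k,\beta)$.
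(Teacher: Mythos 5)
Your proposal is correct and takes essentially the same approach as the paper: both establish the explicit relabelling $\mathfrak{S}_{(N;k;\beta)}$ as a bijection between $\mathcal{GT}(N;k;\beta)$ and $\mathcal{M}_N(k,\beta)$ and then invoke Proposition \ref{GelfandProp}. The paper's proof is only a single line relying on the preceding discussion and figures, so your more detailed verification (in particular the downward induction pinning down the fixed $0$'s and $N$'s, and the careful bookkeeping of how the constraints (\ref{GelfandTsetlinConstraint}) land on the anti-diagonals with the correct shift by $(j-l)\beta N$) is a faithful expansion of what the paper leaves implicit rather than a genuinely different argument.
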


\section{Lattice point count asymptotics}
We first record the following classical theorem on the number of lattice points in convex regions of Euclidean space, see for example Section 2 in \cite{LatticePointCount}.
\begin{thm}\label{LatticePointCountTheorem}
Assume $S \subset \mathbb{R}^{l}$ is a convex region contained in a closed ball of radius $\rho$. Then,
\begin{align}
\#\left(S \cap \mathbb{Z}^l \right)=\textnormal{vol}_l\left(S\right)+O_l\left(\rho^{l-1}\right),
\end{align}
where the implicit constant in the error term depends only on $l$.
\end{thm}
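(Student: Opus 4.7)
My plan is to prove the count by comparing volumes via a cube decomposition together with a boundary layer estimate. For each $n \in \mathbb{Z}^l$ let $Q_n = n + [-1/2,1/2)^l$; these half-open cubes tile $\mathbb{R}^l$ and each has unit volume, so setting $A = \bigcup_{n \in S \cap \mathbb{Z}^l} Q_n$ gives the exact identity $\#(S \cap \mathbb{Z}^l) = \textnormal{vol}_l(A)$. Since each $Q_n$ has diameter $\sqrt{l}$, the symmetric difference $A \triangle S$ lies in the boundary layer $N_r(\partial S) = \{x : \textnormal{dist}(x,\partial S) \le r\}$ with $r = \sqrt{l}/2$: any cube $Q_n$ meeting $S$ but not contained in $S$ must cross $\partial S$, and any point of $S$ not covered by such a cube must sit within $r$ of $\partial S$. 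Hence
\begin{align*}
\bigl|\#(S \cap \mathbb{Z}^l) - \textnormal{vol}_l(S)\bigr| \;\le\; \textnormal{vol}_l\!\left(N_r(\partial S)\right),
\end{align*}
and the task is to show that this boundary-layer volume is $O_l(\rho^{l-1})$.

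For the boundary-layer estimate I would invoke the tools of convex geometry. Writing $B_s$ for the closed origin-centered ball of radius $s$, one has the inclusion $N_r(\partial S) \subset (S+B_r) \setminus (S \ominus B_r)$ (with the inner parallel body understood to be empty when $r$ exceeds the inradius). The Steiner formula for convex bodies expresses $\textnormal{vol}_l(S+B_s)$ as a polynomial in $s$ of degree $l$ whose coefficients are the intrinsic volumes (quermassintegrale) of $S$; an analogous expansion for the inner parallel body yields, for $r$ bounded,
\begin{align*}
\textnormal{vol}_l\!\left(N_r(\partial S)\right) \;\le\; C_l\, r\, \sigma(\partial S),
\end{align*}
where $\sigma$ denotes the $(l-1)$-dimensional surface area. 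Surface area is monotone under inclusion for convex bodies, as a consequence of Cauchy's projection formula $\sigma(\partial S) = c_l^{-1}\int_{S^{l-1}} \textnormal{vol}_{l-1}(\pi_u(S))\,du$. Since $S$ is contained, up to translation, in $B_\rho$, one obtains $\sigma(\partial S) \le \sigma(\partial B_\rho) = c_l \rho^{l-1}$. With $r = \sqrt{l}/2$ and $l$ fixed, this delivers the desired estimate.

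The main obstacle is the degenerate case in which $S$ is not full-dimensional or is so thin that $S \ominus B_r$ is empty, so that the clean Steiner-based bound above is less direct and $\textnormal{vol}_l(S) = 0$. In that case I would project $S$ onto a supporting hyperplane, obtaining a convex subset of $\mathbb{R}^{l-1}$ still contained in a ball of radius $\rho$, and argue inductively on $l$: along any thin direction the number of affine slices of $\mathbb{Z}^l$ meeting $S$ is $O(1)$, and in each slice the lattice point count is $O_{l-1}(\rho^{l-2})$ by induction, giving $O_l(\rho^{l-1})$ overall; the base case $l=1$ is immediate. Once this case is dispatched, everything else is a routine application of classical convex geometry, with no further analytic difficulty.
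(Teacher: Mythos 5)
The paper does not prove this theorem; it is recorded as a known result with a reference to Schmidt's paper, so there is no in-text proof to compare your argument against. Your reduction via the cube tiling is correct and standard: with $A=\bigcup_{n\in S\cap\mathbb{Z}^l}Q_n$ one has $\#(S\cap\mathbb{Z}^l)=\mathrm{vol}_l(A)$, and $A\triangle S\subset N_r(\partial S)$ with $r=\sqrt{l}/2$ follows because in either case ($x\in A\setminus S$ or $x\in S\setminus A$) the segment from $x$ to the center $n$ of the cube containing $x$ joins a point of $S$ to a point of $S^c$, hence crosses $\partial S$ within distance $r$ of $x$. One small imprecision in the Steiner step: there is no Steiner-type polynomial expansion for the inner parallel body $S\ominus B_r$, so ``an analogous expansion for the inner parallel body'' is not a usable step as written. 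The conclusion $\mathrm{vol}_l(S)-\mathrm{vol}_l(S\ominus B_r)\le r\,\sigma(\partial S)$ is nonetheless true, for instance because $\tfrac{d}{ds}\mathrm{vol}_l(S\ominus B_s)=-\sigma(\partial(S\ominus B_s))\ge-\sigma(\partial S)$ by monotonicity of surface area under inclusion; you should use such an argument in place of the phantom expansion.

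The genuine gap is in the degenerate (thin) case. Your claim that ``along any thin direction the number of affine slices of $\mathbb{Z}^l$ meeting $S$ is $O(1)$'' is false in general: if $S$ is thin only in a direction $u$ that is irrational, there are no lattice hyperplanes perpendicular to $u$ at all, and if $u$ is a primitive integer vector with large entries, the lattice hyperplanes perpendicular to $u$ are spaced $|u|^{-1}$ apart, so $S$ can meet $\asymp|u|$ of them even with $O(1)$ width in direction $u$. Moreover, a body can be thin in an irrational direction while having width $\asymp\rho$ in every coordinate direction (e.g.\ a long thin parallelogram $\{|x_1-\alpha x_2|\le\varepsilon,\ |x_2|\le\rho\}$ with $\alpha$ irrational), so you cannot pick a coordinate direction to slice along and get $O(1)$ slices. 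The fix is to slice by the coordinate hyperplanes $\{x_l=n\}$, accept $O(\rho)$ nonempty slices, apply the inductive hypothesis to each $(l-1)$-dimensional slice, and then control $\sum_n\mathrm{vol}_{l-1}(S\cap\{x_l=n\})-\mathrm{vol}_l(S)$ by $O_l(\rho^{l-1})$ using unimodality of $t\mapsto\mathrm{vol}_{l-1}(S\cap\{x_l=t\})$ (a consequence of Brunn--Minkowski) together with the trivial bound $\mathrm{vol}_{l-1}(S\cap\{x_l=t\})\le\kappa_{l-1}\rho^{l-1}$. Note that this slicing-plus-unimodality induction actually proves the whole theorem in one stroke, without the cube tiling, the Steiner machinery, or any case split; as it stands your proposal treats the thin case as a minor afterthought when in fact it requires the substantive idea.
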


Our aim is to apply this theorem. To this end, consider the index set:
\begin{align*}
\mathcal{S}_{(k,\beta)}=\bigg\{ (i,j), \ 1 \le i,j \le k\beta: (i,j) \neq (1,2\beta l), (k\beta-2\beta l+1,k\beta ), \textnormal{ for } l=1,\dots, \left\lfloor \frac{k}{2}\right\rfloor \bigg \}.
\end{align*}
Observe that $\mathcal{S}_{(k,\beta)}$ has $(k\beta)^2-(k-1)$ elements. Now consider the region, denoted by $\mathcal{V}_{(k,\beta)}$, contained in $\mathbb{R}^{(k\beta)^2-(k-1)}$ and defined by the following system of inequalities:
\begin{itemize}
\item[(1)] $0\le u_i^{(j)} \le 1$, for all $(i,j) \in S_{(k,\beta)}$,
\item[(2)] for $l=1, \dots, \left\lfloor\frac{k}{2} \right\rfloor$:
\begin{align*}
u_1^{(2\beta l)}&=l\beta-u_2^{(2\beta l-1)}-\cdots -u_{2\beta l}^{(1)}, \textnormal{ we have } 0\le u_1^{(2\beta l)} \le 1, \\
u_{k\beta-2\beta l+1}^{(k\beta)}&=l\beta -u_{k\beta-2\beta l+2}^{(k\beta-1)}-\cdots -u_{k\beta}^{(k\beta-2\beta l+1)}, \textnormal{ we have } 0 \le u_{k\beta-2\beta l+1}^{(k\beta)} \le 1,
\end{align*}
\item[(3)] the matrix $\left(u_i^{(j)}\right)_{i,j=1}^{k\beta}$ is in the set $\mathfrak{I}_{k\beta}$.
\end{itemize}

Note that, $\mathcal{V}_{(k,\beta)}$ is convex as it is an intersection of convex sets (half planes). Moreover, $\mathcal{V}_{(k,\beta)}$ is contained in $[0,1]^{(k\beta)^2-(k-1)}$ and therefore contained in a closed ball of radius $\sqrt{(k\beta)^2-(k-1)}$.

We are now in a position to prove Theorem \ref{MainTheorem}.

\begin{proof}[Proof of Theorem \ref{MainTheorem}]
Observe that, 
\begin{align}
\# \mathcal{M}_N\left(k,\beta\right)=\# \left(\mathbb{Z}^{k^2\beta^2-(k-1)}\cap \left(N \mathcal{V}_{(k,\beta)}\right)\right).
\end{align}
Here, $N \mathcal{V}_{(k,\beta)}=\{N x: x\in  \mathcal{V}_{(k,\beta)}\}$ is the dilate of $\mathcal{V}_{(k,\beta)}$ by a factor of $N$.

Thus, from Proposition \ref{LatticePointRepProp} and Theorem \ref{LatticePointCountTheorem}  with $S=N \mathcal{V}_{(k,\beta)}$ (whose conditions are satisfied as shown above) we obtain:
\begin{align}
\textnormal{MoM}_N\left(k,\beta\right)=\# \mathcal{M}_N\left(k,\beta\right)=\# \left(\mathbb{Z}^{k^2\beta^2-(k-1)}\cap \left(N \mathcal{V}_{(k,\beta)}\right)\right)=\textnormal{vol}\left(N\mathcal{V}_{(k,\beta)}\right)+O_{k,\beta}\left(N^{(k\beta)^2-k}\right).
\end{align}
Since, 
\begin{align*}
\textnormal{vol}\left(N\mathcal{V}_{(k,\beta)}\right)=N^{(k\beta)^2-(k-1)}\textnormal{vol}\left(\mathcal{V}_{(k,\beta)}\right)
\end{align*}
the statement of the theorem follows with $\mathfrak{c}(k,\beta)=\textnormal{vol}\left(\mathcal{V}_{(k,\beta)}\right)$.
\end{proof}

\section{On the leading order coefficient $\mathfrak{c}(k,\beta)$}

\subsection{An expression involving volumes of trapezoidal continuous Gelfand-Tsetlin patterns}
The aim of this section is to obtain an explicit expression for the leading order coefficient $\mathfrak{c}(k,\beta)=\textnormal{vol}\left(\mathcal{V}_{(k,\beta)}\right)$ in terms of (integrals of determinants of) $B$-splines, certain piecewise polynomial functions on $\mathbb{R}$, introduced by Curry and Schoenberg \cite{CurrySchoenberg}. These objects appear in approximation theory and the study of total positivity, \cite{Karlin}. We begin with some preliminary definitions.

We define the continuous Weyl chamber $W^n$ by (note the reversal in the order of coordinates compared to the definition of partitions/signatures, this is to keep with the notations of \cite{OlshanskiProjections} from which we draw some of our formulae):
\begin{align*}
W^n=\big\{x=(x_1,\dots,x_n)\in \mathbb{R}^n:x_1\le \cdots \le x_n \big\}.
\end{align*}
We say that $y \in W^n$ and $x \in W^{n+1}$ interlace and still write $y \prec x$ if:
\begin{align*}
x_1\le y_1\le x_2 \le \dots \le x_n \le y_n\le x_{n+1}.
\end{align*}
We also write $W^n_{[0,1]}$ for the Weyl chamber with coordinates in $[0,1]$:
\begin{align*}
W_{[0,1]}^n=\big\{x=(x_1,\dots,x_n)\in [0,1]^n:x_1\le \cdots \le x_n \big\}.
\end{align*}
The definition of a continuous Gelfand-Tsetlin pattern (also referred to as a Gelfand-Tsetlin polytope) is completely analogous to the discrete setting: an interlacing sequence $\{x^{(i)} \}_{i=1}^n$ in $\{W^i\}_{i=1}^n$,
\begin{align*}
x^{(1)}\prec x^{(2)}\prec \cdots \prec x^{(n-1)}\prec x^{(n)}.
\end{align*}
We also call an interlacing sequence $\{x^{(i)} \}_{i=m}^n$ in $\{W^i\}_{i=m}^n$ with $n-m\ge 1$,
\begin{align*}
x^{(m)}\prec x^{(m+1)}\prec \cdots \prec x^{(n-1)}\prec x^{(n)},
\end{align*}
a 'trapezoidal' continuous Gelfand-Tsetlin pattern.

For $x \in W^n$ and $a \in W^m$, with $n-m\ge 2$ and $m\ge 1$, we define $\mathsf{Vol}_{m}^{n}(x,a)$ as follows (the volume of a 'trapezoidal' continuous Gelfand-Tsetlin pattern with fixed top row $x \in W^n$ and fixed bottom row $a \in W^m$):
\begin{align*}
\mathsf{Vol}_{m}^{n}(x,a)=\int_{W^{m+1}\times \cdots \times W^{n-1}}dy^{(m+1)}\cdots dy^{(n-1)}\mathbf{1}\left(a\prec y^{(m+1)}\right)\mathbf{1}\left(y^{(m+1)}\prec y^{(m+2)}\right)\cdots \mathbf{1}\left(y^{(n-1)}\prec x\right).
\end{align*}
Observe that, if $x \in W^n$ and $a \in W^m$ do not belong to an interlacing sequence then $\mathsf{Vol}_{m}^{n}(x,a)\equiv 0$.
For $x \in W^n$, with $n\ge 2$, we define $\mathsf{Vol}^{n}(x)$ by (the volume of a standard continuous Gelfand-Tsetlin pattern with fixed top row $x \in W^n$):
\begin{align*}
\mathsf{Vol}^{n}(x)=\int_{W^{1}\times \cdots \times W^{n-1}}dy^{(1)}\cdots dy^{(n-1)}\mathbf{1}\left(y^{(1)}\prec y^{(2)}\right)\mathbf{1}\left(y^{(2)}\prec y^{(3)}\right)\cdots \mathbf{1}\left(y^{(n-1)}\prec x\right).
\end{align*}
For notational convenience we define a final quantity that will appear in our formulae for $\textnormal{vol}\left(\mathcal{V}_{(k,\beta)}\right)$ when $k$ is even. We thus define $\mathsf{Vol}_{(m,n)}(x,y)$, for $n-m\ge 1$ and $x,y \in W^m$, as follows:
\begin{align*}
\mathsf{Vol}_{(m,n)}(x,y)&\overset{\textnormal{def}}{=}\int_{W^{m+1}\times \cdots \times W^n \times W^{n-1} \times \cdots \times W^{m+1}}dz^{(m+1)}\cdots dz^{(n)}d\tilde{z}^{(n-1)}\cdots d\tilde{z}^{(m+1)}\mathbf{1}\left(x\prec z^{(m+1)}\right)\times\cdots \\
&\ \ \ \ \ \  \ \ \ \cdots \times \mathbf{1}\left(z^{(n-1)}\prec z^{(n)}\right)\mathbf{1}\left(\tilde{z}^{(n-1)}\prec z^{(n)}\right)\cdots \mathbf{1}\left(y\prec \tilde{z}^{(m+1)}\right)\\
&=\int_{W^n}^{}\mathsf{Vol}_{m}^{n}(z,x)\mathsf{Vol}_{m}^{n}(z,y)dz.
\end{align*} 
We can now give an explicit expression for $\mathfrak{c}(k,\beta)=\textnormal{vol}\left(\mathcal{V}_{(k,\beta)}\right)$ in terms of the quantities above.
\begin{prop}Let $\beta \in \mathbb{N}$. Then, if $k$ is even:
\begin{align*}
\mathfrak{c}(k,\beta)&=\textnormal{vol}\left(\mathcal{V}_{(k,\beta)}\right)\\&=\int_{W^{2\beta}_{[0,1]}\times \cdots \times W^{k\beta}_{[0,1]}\times W^{(k-1)\beta}_{[0,1]}\times \cdots \times W^{2\beta}_{[0,1]}}dx^{(1)}\cdots dx^{\left(\frac{k}{2}\right)}d\tilde{x}^{\left(\frac{k}{2}-1\right)}\cdots d\tilde{x}^{(1)}\mathsf{Vol}^{2\beta}(x^{(1)})\mathsf{Vol}_{2\beta}^{4\beta}(x^{(2)},x^{(1)})\cdots\\
\cdots &\times\mathsf{Vol}_{(k-2)\beta}^{k\beta}\left(x^{\left(\frac{k}{2}\right)},x^{\left(\frac{k}{2}-1\right)}\right) \mathsf{Vol}_{(k-2)\beta}^{k\beta}\left(x^{\left(\frac{k}{2}\right)},\tilde{x}^{\left(\frac{k}{2}-1\right)}\right)\times \mathsf{Vol}_{(k-4)\beta}^{(k-2)\beta}\left(\tilde{x}^{\left(\frac{k}{2}-1\right)},\tilde{x}^{\left(\frac{k}{2}-2\right)}\right)\cdots \mathsf{Vol}^{2\beta}(\tilde{x}^{(1)})\\ &\times\prod_{j=1}^{\frac{k}{2}}\delta\left(\sum_{i=1}^{2\beta j}x_{i}^{(j)}-\beta j \right)\prod_{j=1}^{\frac{k}{2}-1}\delta\left(\sum_{i=1}^{2\beta j}\tilde{x}_{i}^{(j)}-\beta j\right).
\end{align*}
While, if $k$ is odd:
\begin{align*}
\mathfrak{c}(k,\beta)&=\textnormal{vol}\left(\mathcal{V}_{(k,\beta)}\right)\\&=\int_{W^{2\beta}_{[0,1]}\times \cdots \times W^{(k-1)\beta}_{[0,1]}\times W^{(k-1)\beta}_{[0,1]}\times \cdots \times W^{2\beta}_{[0,1]}}dx^{(1)}\cdots dx^{\left(\frac{k-1}{2}\right)}d\tilde{x}^{\left(\frac{k-1}{2}\right)}\cdots d\tilde{x}^{(1)}\mathsf{Vol}^{2\beta}(x^{(1)})\mathsf{Vol}^{4\beta}_{2\beta}(x^{(2)},x^{(1)})\cdots\\
\cdots \times& \mathsf{Vol}_{(k-3)\beta}^{(k-1)\beta}\left(x^{\left(\frac{k-1}{2}\right)},x^{\left(\frac{k-1}{2}-1\right)}\right)\mathsf{Vol}_{((k-1)\beta,k\beta)}\left(x^{\left(\frac{k-1}{2}\right)},\tilde{x}^{\left(\frac{k-1}{2}\right)}\right) \mathsf{Vol}_{(k-3)\beta}^{(k-1)\beta}\left(\tilde{x}^{\left(\frac{k-1}{2}\right)},\tilde{x}^{\left(\frac{k-1}{2}-1\right)}\right)\times \cdots \times \mathsf{Vol}^{2\beta}(\tilde{x}^{(1)})\\
&\times\prod_{j=1}^{\frac{k-1}{2}}\delta\left(\sum_{i=1}^{2\beta j}x_{i}^{(j)}-\beta j \right)\prod_{j=1}^{\frac{k-1}{2}}\delta\left(\sum_{i=1}^{2\beta j}\tilde{x}_{i}^{(j)}-\beta j \right).
\end{align*}
Here, $\delta(\cdot)$ is the Dirac delta-function.
\end{prop}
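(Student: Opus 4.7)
The plan is to compute $\textnormal{vol}(\mathcal{V}_{(k,\beta)})$ by reinterpreting it as a volume over continuous Gelfand-Tsetlin patterns. The coordinate relabeling that defines $\mathfrak{S}_{(N;k;\beta)}$ in the discrete setting is purely combinatorial (a permutation of coordinates) and extends verbatim to real-valued arrays; under this volume-preserving extension, $\mathcal{V}_{(k,\beta)}$ is identified with the polytope of continuous Gelfand-Tsetlin patterns $\{y^{(i)}\}_{i=1}^{2k\beta}$ in $\{W^i\}_{i=1}^{2k\beta}$ whose top row is
\begin{align*}
y^{(2k\beta)}=(\underbrace{0,\dots,0}_{k\beta},\underbrace{1,\dots,1}_{k\beta})
\end{align*}
and which satisfy the rescaled versions of the constraints \eqref{GelfandTsetlinConstraint} at levels $2\beta, 4\beta,\dots,2(k-1)\beta$. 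Hence $\mathfrak{c}(k,\beta)$ equals the Lebesgue volume of the free coordinates of this set.

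The key structural observation is the diamond of forced coordinates created by the fixed top row: propagating the interlacing downward, at each level $2k\beta-s$ with $0\le s\le k\beta$ the first $k\beta-s$ entries are forced to $1$, the last $k\beta-s$ to $0$, and only the middle $s$ entries (a weakly increasing sequence in $[0,1]$) are free, while for $s\ge k\beta$ no entries are forced and the rows form a standard continuous Gelfand-Tsetlin shape of lengths $1,2,\dots,k\beta$. At the sum-constrained level $2l\beta$ with $1\le l\le k-1$, the free part has length $2\min(l,k-l)\beta$, and after subtracting the contribution of the forced $0$'s and $1$'s and rescaling, the constraint \eqref{GelfandTsetlinConstraint} reduces to the single linear condition ``sum of free coordinates $=\min(l,k-l)\beta$''. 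These free parts are precisely the vectors $x^{(l)}$ (for $l\le k/2$) and $\tilde{x}^{(k-l)}$ (for $l\ge k/2$) appearing in the statement, and they lie in $W^{2\min(l,k-l)\beta}_{[0,1]}$.

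By Fubini, I would write the total volume as an integral over the free parts at the sum-constrained levels (weighted by $\delta$-functions enforcing the sum constraints) multiplied by the volumes of the ``slabs'' of intermediate rows between two consecutive sum-constrained levels. For a slab between sum-constrained levels on the same side of the equator, a direct inequality check, completely analogous to the one performed in the excerpt for the discrete bijection $\mathfrak{S}_{(N;k;\beta)}$, shows that the residual interlacing between the free parts of consecutive rows (once forced $0$'s and $1$'s are accounted for) is exactly the Weyl-chamber interlacing $y\prec z$; hence the slab volume is precisely $\mathsf{Vol}^n_m$ of the two bounding free parts. The bottom slab (below the first sum-constrained level) is a full triangular continuous Gelfand-Tsetlin pattern with top row $x^{(1)}$, giving $\mathsf{Vol}^{2\beta}(x^{(1)})$; by the top--bottom symmetry of the diamond, the topmost slab, running from $\tilde{x}^{(1)}$ upward through free-part lengths $2\beta-1,\dots,1,0$ to the apex at level $2k\beta$, is again a full triangular pattern with top row $\tilde{x}^{(1)}$, giving $\mathsf{Vol}^{2\beta}(\tilde{x}^{(1)})$.

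The two cases in the statement reflect the parity of $k$. When $k$ is even the equator at level $k\beta$ is itself a sum-constrained level (the variable $x^{(k/2)}$), and the two slabs flanking it contribute $\mathsf{Vol}^{k\beta}_{(k-2)\beta}(x^{(k/2)},x^{(k/2-1)})$ and $\mathsf{Vol}^{k\beta}_{(k-2)\beta}(x^{(k/2)},\tilde{x}^{(k/2-1)})$. When $k$ is odd no sum constraint sits at the equator, so the equator row $z\in W^{k\beta}_{[0,1]}$ is integrated out along with the rest of the intermediate rows, fusing the two trapezoidal half-slabs about the equator into $\mathsf{Vol}_{((k-1)\beta,k\beta)}(x^{((k-1)/2)},\tilde{x}^{((k-1)/2)})$ by its very definition as a convolution of two $\mathsf{Vol}^n_m$'s. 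The main obstacle will be bookkeeping: matching the free-coordinate indices in the $W^i$-convention to the indexed coordinates $x^{(l)}_i,\tilde{x}^{(l)}_i$ (notably the reversal of order when passing from decreasing signatures to increasing Weyl-chamber coordinates), and verifying that the $k-1$ rescaled sum constraints produce exactly the stated $\delta$-factors $\delta(\sum_i x_i^{(j)}-\beta j)$ and $\delta(\sum_i \tilde{x}_i^{(j)}-\beta j)$.
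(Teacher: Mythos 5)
Your proposal is correct and takes essentially the same approach as the paper: express $\textnormal{vol}\left(\mathcal{V}_{(k,\beta)}\right)$ as an integral over the diagonals (equivalently, the free coordinates of the continuous Gelfand-Tsetlin pattern) with Dirac delta-functions enforcing the $k-1$ sum constraints, and then integrate out the intermediate rows between consecutive sum-constrained levels via Fubini, which by definition yields the factors $\mathsf{Vol}^{2\beta}$, $\mathsf{Vol}_m^n$, and (for odd $k$) $\mathsf{Vol}_{(m,n)}$. The paper phrases this directly in terms of diagonals of the matrix in $\mathcal{V}_{(k,\beta)}$ rather than pulling back through the bijection $\mathfrak{S}_{(N;k;\beta)}$ to the Gelfand-Tsetlin picture, but the two descriptions are equivalent.
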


\begin{proof}
We first observe that $\textnormal{vol}\left(\mathcal{V}_{(k,\beta)}\right)$ is equal to the following integral (each $z^{(i)}$ and $\tilde{z}^{(j)}$ corresponds to a diagonal of an element in $\mathcal{V}_{(k,\beta)}$ written as a matrix):
\begin{align*}
\int dz^{(1)}\cdots dz^{(k\beta-1)}dz^{(k\beta)}d\tilde{z}^{(k\beta-1)}\cdots d\tilde{z}^{(1)} \mathbf{1}\left(z^{(1)}\prec z^{(2)}\right)\cdots \mathbf{1}\left(z^{(k\beta-1)}\prec z^{(k\beta)}\right)\mathbf{1}\left(\tilde{z}^{(k\beta-1)}\prec z^{(k\beta)}\right)\cdots\times \\
\cdots \times \mathbf{1}\left(\tilde{z}^{(1)}\prec \tilde{z}^{(2)}\right)\prod_{j=1}^{\lfloor \frac{k}{2}\rfloor}\delta \left(\sum_{i}^{2\beta j}z_i^{(2\beta j)}-j\beta\right)\prod_{j=1}^{\lfloor \frac{k}{2}\rfloor}\delta \left(\sum_{i}^{2\beta j}\tilde{z}_i^{(2\beta j)}-j\beta\right),
\end{align*}
where the integral is over the region:
\begin{align*}
(z^{(1)},\dots,z^{(k\beta-1)},z^{(k\beta)},\tilde{z}^{(k\beta-1)},\dots,\tilde{z}^{(1)})\in W^1_{[0,1]}\times \cdots \times W^{k\beta-1}_{[0,1]}\times  W^{k\beta}_{[0,1]}\times W^{k\beta-1}_{[0,1]}\times \cdots \times W^1_{[0,1]}
\end{align*}
and if $k$ is even we write $\tilde{z}^{(k\beta)}=z^{(k\beta)}$ (as remarked before, in this case one of the two sum constraints is superfluous).

The statement of the proposition follows after we perform the integrations between two consecutive sum constraints, which are given by the quantities $\mathsf{Vol}_m^n, \mathsf{Vol}^n, \mathsf{Vol}_{(m,n)}$ by their definition.
\end{proof}

\begin{rmk}
Observe that, we can restrict the integrations above over the interiors of the Weyl chambers $\mathring{W}^n_{[0,1]}$.
\end{rmk}

\subsection{Continuous Gelfand-Tsetlin pattern volumes and B-splines}

We now turn our attention to $\mathsf{Vol}_{m}^{n}(z,y)$ which is the basic building block in the expression for $\textnormal{vol}\left(\mathcal{V}_{(k,\beta)}\right)$. Firstly, the simpler quantity $\mathsf{Vol}^n(x)$ has a well-known explicit expression, see for example Corollary 3.2 of \cite{OlshanskiProjections}, given by
\begin{align}
\mathsf{Vol}^n(x)=\frac{\Delta_n(x)}{\prod_{j=1}^{n}(j-1)!},
\end{align}
where for $x \in W^n$ we denote by $\Delta_n(x)$ the Vandermonde determinant
\begin{align*}
\Delta_n(x)=\prod_{1\le i <j\le n}^{}(x_j-x_i).
\end{align*}
Now, for $y \in \mathring{W}^n$, the interior of the chamber $W^n$, we define the $B$-spline $a\mapsto M(a;y_1,\dots,y_n)$ with knots $y_1<\cdots<y_n$ by the explicit expression:
\begin{align*}
M(a;y_1,\dots,y_n)=(n-1) \sum_{i:y_i>a}\frac{(y_i-a)^{n-2}}{\prod_{r:r\neq i}^{}(y_i-y_r)}.
\end{align*}
Equivalently, see \cite{OlshanskiProjections}, \cite{CurrySchoenberg}, it can be defined as the only function $a\mapsto M(a;y_1,\dots,y_n)$ on $\mathbb{R}$ of class $C^{n-3}$, vanishing outside the interval $(y_1,y_n)$, equal to a polynomial of degree $\le n-2$ on each interval $(y_i,y_{i+1})$ and normalized by the condition:
\begin{align*}
\int_{-\infty}^{+\infty}M(a;y_1,\dots,y_n)da=1.
\end{align*}
An explicit expression for $\mathsf{Vol}_{m}^{n}(z,y)$, for $z \in \mathring{W}^{n}$, is due to Olshanski \cite{OlshanskiProjections}. (Also, observe that when $z \in \partial W^n$, the boundary of the chamber, namely when some of the coordinates coincide, $\mathsf{Vol}_{m}^{n}(z,y)\equiv 0$ and similarly $\mathsf{Vol}^{n}(z)\equiv 0$.) This follows from the proof of Theorem 3.3 in \cite{OlshanskiProjections} (see Step 2 of the proof of that theorem and display (15) therein) and is given by
\begin{align}
\mathsf{Vol}_{m}^{n}(z,y)=\mathsf{const}_{m}^n\prod_{1\le j-i \le n-m}^{}(z_j-z_i)\det \left[M(y_j;z_i,\dots,z_{n-m+i})\right]_{i,j=1}^m
\end{align}
where $\mathsf{const}_{m}^n$ is explicit:
\begin{align*}
\mathsf{const}_{m}^n=\frac{1}{\left(\left(n-m\right)!\right)^m}\prod_{j=1}^{n-m}\frac{1}{(j-1)!}.
\end{align*}

\subsection{The case $k=2$ and Painleve V}
Unlike the argument for the asymptotics of $\textnormal{MoM}_N(k,\beta)$ which is generic for any $k\in \mathbb{N}$, the study of the constant $\mathfrak{c}(k,\beta)$ appears to be quite different depending on whether $k=2$ or $k\ge3$ (when $k=1$ of course $\textnormal{MoM}_N(1,\beta)$ is completely explicit).

For $k=2$, the expression for $\textnormal{vol}\left(\mathcal{V}_{(2,\beta)}\right)$ only involves $\mathsf{Vol}^{2\beta}$ which is simply a Vandermonde determinant multiplied by a constant. Putting everything together we obtain:
\begin{align*}
\mathfrak{c}(2,\beta)=\textnormal{vol}\left(\mathcal{V}_{(2,\beta)}\right)=\frac{1}{(2\beta)!G(1+2\beta)^2}\int_{[0,1]^{2\beta}}\delta\left(\sum_{i=1}^{2\beta}t_i-\beta\right)\prod_{1\le i<j \le 2\beta}^{}(t_i-t_j)^2dt_1\cdots dt_{2\beta},
\end{align*}
where $G(1+2\beta)=1!2!3!\cdots(2\beta-1)!$ is the Barnes G-function.

Then, by writing the Dirac delta-function as
\begin{align*}
\delta(c)=\int_{-\infty}^{\infty}\exp\left(2\pi \i c y\right)dy
\end{align*}
and using the Andreif identity, it is possible to show that, see Section 2 of \cite{BasorPainleveV} for the details (in their notation $k=2\beta,c=\beta$):
\begin{align*}
\mathfrak{c}(2,\beta)=\frac{1}{G(1+2\beta)^2}\int_{-\infty}^{\infty}\exp\left(2\pi \i \beta u\right)D_{2\beta}\left(2\pi \i u\right)du,
\end{align*}
where the Hankel determinant $D_{2\beta}$ is given by:
\begin{align*}
D_{2\beta}(t)&=\det \left[g^{(i+j-2)}(t)\right]_{i,j=1}^{2\beta},\\
g^{(n)}(t)&=\int_{0}^{1}(-x)^n\exp(-tx)dx.
\end{align*}
Finally, $D_{2\beta}(t)$ is known to fall into a special class of Hankel determinants which have a representation in terms of the Painleve V equation, see \cite{BasorGeRubinstein},\cite{BasorPainleveV}. More precisely let
\begin{align*}
H_{2\beta}(t)=t\frac{d}{dt}\log D_{2\beta}(t)+(2\beta)^2.
\end{align*}
Then, we have:
\begin{align*}
\left(t\frac{d^2}{dt^2}H_{2\beta}(t)\right)^2=\left(H_{2\beta}(t)+\left(4\beta-t\right)\frac{d}{dt}H_{2\beta}(t)\right)^2-4\left(\frac{d}{dt}H_{2\beta}(t)\right)^2\left((2\beta)^2-H_{2\beta}(t)+t\frac{d}{dt}H_{2\beta}(t)\right).
\end{align*}

As soon as one moves to $k=3$ the expression for $\textnormal{vol}(\mathcal{V}_{3,\beta})$ involves the quantity $\mathsf{Vol}_{(2\beta,3\beta)}$ in the integrand, which is given in terms of the $B$-splines and which is more complicated to analyse. In particular, it is a very interesting problem to understand whether there exists a useful Hankel determinant representation for $\mathfrak{c}(3,\beta)$.

\bigskip
\noindent
{\sc Mathematical Institute, University of Oxford, Oxford, OX2 6GG, UK.}\newline
\href{mailto:theo.assiotis@maths.ox.ac.uk}{\small theo.assiotis@maths.ox.ac.uk}

\bigskip
\noindent
{\sc Mathematical Institute, University of Oxford, Oxford, OX2 6GG, UK.}\newline
\href{mailto:jon.keating@maths.ox.ac.uk}{\small jon.keating@maths.ox.ac.uk}

\begin{thebibliography}{}

   \bibitem{BaileyKeating}
      {\sc E. Bailey, J.P. Keating,}
     \textit{On the moments of the moments of the characteristic polynomials of random unitary matrices}, to appear in Communications in Mathematical Physics, Available from https://arxiv.org/abs/1807.06605, (2018).
     
      \bibitem{BasorPainleveV}
                    {\sc E. Basor, Y. Chen, T. Ehrhardt,}
                   \textit{Painleve V and time-dependent Jacobi polynomials}, Journal of Physics A: Mathematical and Theoretical, \textbf{43},  (2010). 
     
   \bibitem{BasorGeRubinstein}
               {\sc E. Basor, F. Ge, M.O. Rubinstein,}
              \textit{Some multidimensional integrals in number theory and connections with the Painleve V equation}, Journal of Mathematical Physics, \textbf{59}, Issue 9, (2018). 
              
      \bibitem{BorodinOlshanskiYoungBouquet}
            {\sc A. Borodin, G. Olshanski,}
           \textit{The Young Bouquet and its Boundary}, Moscow Mathematical Journal, Vol. 13, Issue 2, 193-232, (2013).           
     
     
       \bibitem{BumpGamburd}
           {\sc D. Bump, A. Gamburd,}
          \textit{On the Averages of Characteristic Polynomials From Classical Groups}, Communications in Mathematical Physics, 265, Issue 1, 227-274, (2006).
          
      \bibitem{ClaeysKrasovsky}
                  {\sc T. Claeys, I. Krasovsky,}
                 \textit{Toeplitz determinants with merging singularities}, Duke Mathematical Journal, Vol. 164, No. 15, 2897-2987, (2015).        
          
    
  \bibitem{Autocorrelation}
             {\sc J.B. Conrey, D.W. Farmer, J.P. Keating, M.O. Rubinstein, N.C. Snaith,}
            \textit{Autocorrelation of random matrix polynomials}, Communications in Mathematical Physics, 237, Issue 3, 365-395, (2003).
            
    \bibitem{Integralmoments}
  {\sc J.B. Conrey, D.W. Farmer, J.P. Keating, M.O. Rubinstein, N.C. Snaith,}
               \textit{Integral moments of L-functions}, Proceedings of the London Mathematical Society, \textbf{91} (3), 33-104, (2005).         
   
   \bibitem{CurrySchoenberg}
     {\sc H.B. Curry, I.J. Schoenberg,}
      \textit{On Polya frequency functions IV: The fundamental spline functions and their limits}, Journal d' Analyse Mathematique, \textbf{17}, 71-107, (1966).  
        
    

  \bibitem{GorinRahman}
     {\sc V. Gorin, M. Rahman,}
    \textit{Random sorting networks: local statistics via random matrix laws
    }, to appear Probability Theory and Related Fields, (2018).
    
      \bibitem{Forrester}
         {\sc P. J. Forrester ,}
       \textit{Log-gases and random matrices}, Princeton University Press, (2010). 
    
    
    \bibitem{FyodorovKeating}
       {\sc Y. Fyodorov, J.P. Keating,}
      \textit{Freezing transitions and extreme values: random matrix theory, and disordered landscapes}, Philosophical Transactions of the Royal Society A: Mathematical, Physical and Engineering Sciences, \textbf{372}, (2014).  
    
    
   \bibitem{Karlin}
   {\sc S. Karlin,}
   \textit{Total Positivity, Volume 1}, Stanford University Press, (1968).  

    
    \bibitem{KeatingSnaith}
               {\sc J.P. Keating, N.C. Snaith,}
              \textit{Random matrix theory and $\zeta \left(\frac{1}{2}+\i t\right)$}, Communications in Mathematical Physics, 214, Issue 3, 57-89, (2000).  
              
       \bibitem{KeatingRodgersRodittyGershonRudnick}
                  {\sc J.P. Keating, B. Rodgers, E. Roditty-Gershon, Z. Rudnick,}
                 \textit{Sums of divisor functions in $\mathbb{F}_q[t]$ and matrix integrals}, Mathematische Zeitschrift, \textbf{288}, no. 1-2, 167-198, (2018).           
    
    
   \bibitem{OlshanskiProjections}
   {\sc G. Olshanski,}
         \textit{Projections of orbital measures, Gelfand-Tsetlin polytopes and splines}, Journal of Lie Theory,Vol. 23, No.4, 1011-1022, (2013).  
    
    \bibitem{LatticePointCount}
       {\sc W. M. Schmidt,}
      \textit{Northcott's theorem on heights II. The quadratic case}, Acta Arithmetica, \textbf{70} (4), 343-375, (1995).

\end{thebibliography}
\end{document}